\newtheorem{thm}{Theorem}[section]
\newtheorem{lem}[thm]{Lemma}
\theoremstyle{definition}
\newtheorem{quest}[thm]{Question}
\DeclareMathOperator\Cay{Cay}
\DeclareMathOperator\agl{AGL}
\DeclareMathOperator\gl{GL}
\title{\bf On non-normal subgroup perfect codes}
\author{
Angelot Behajaina \\
\small Laboratoire de Mathématiques Nicolas Oresme, \\[-0.8ex]
\small Université de Caen Normandie\\[-0.8ex] 
\small BP 5186, 14032 Caen Cedex,  France\\
\small\tt angelot.behajaina@unicaen.fr \vspace*{0.5cm}\\	
Roghayeh Maleki \\
\small Department of Mathematics and Statistics \\[-0.8ex]
\small University of Regina \\[-0.8ex] 
\small 3737 Wacana Parkway, Regina, Saskatchewan, Canada \\
\small\tt {rmaleki}@uregina.ca 
\vspace*{0.5cm}\\
 Andriaherimanana Sarobidy Razafimahatratra\\
\small Department of Mathematics and Statistics \\[-0.8ex]
\small University of Regina \\[-0.8ex] 
\small 3737 Wacana Parkway, Regina, Saskatchewan, Canada \\
\small\tt sarobidy@phystech.edu \\
}
\begin{document}	

\maketitle

\begin{abstract}
  Let $X = (V,E)$ be a graph. A subset $C \subseteq V(X)$ is a \emph{perfect code} of $X$ if $C$ is a coclique of $X$ with the property that any vertex in $V(X)\setminus C$ is adjacent to exactly one vertex in $C$. Given a finite group $G$ with identity element $e$ and $H\leq G$, $H$ is a \emph{subgroup perfect code} of $G$ if there exists an inverse-closed subset $S \subseteq G\setminus \{e\}$ such that $H$ is a perfect code of the Cayley graph $\Cay(G,S)$ of $G$ with connection set $S$. In this short note, we give an infinite family of finite groups $G$ admitting a non-normal subgroup perfect code $H$ such that there exists $ g\in G$ with $g^2\in H$ but $(gh)^2 \neq e$, for all $h \in H$; thus, answering a question raised by Wang, Xia, and Zhou in [Perfect sets in Cayley graphs. {\it arXiv preprint} arXiv:2006.05100, 2020].
  \\ \\
  {\bf Mathematics Subject Classifications:} 05C25; 05C69, 94B25.
\end{abstract}

\section{Introduction} 

The notion of perfect codes is fundamental to coding theory.
In 1973, Biggs \cite{biggs1973perfect} extended this concept for distance-transitive graphs, which led to various generalizations for association schemes and simple graphs \cite{bannai1977perfect,delsarte1973algebraic,hammond1975perfect,smith1980perfect}. The generalization of perfect codes for simple graphs is of particular interest to us. Given a graph $X = (V,E)$ and $t\in \mathbb{N}$, a subset $C \subseteq V(X)$ is a \emph{perfect $t$-code} if for every vertex $x\in V(X)$, there exists exactly one vertex $c\in C$ which is at distance at most $t$ from the vertex $x$. In particular, $C$ is a \emph{coclique} or an \emph{independent set} of the graph $X$.
A perfect $1$-code of $X$ is called a \emph{perfect code}.

One can also extend the concept of perfect codes for groups. Given a finite group $G$ with identity element $e$ and a subset $S \subseteq G\setminus \{e\}$ which is inverse-closed (i.e., if $x\in S$ then $x^{-1} \in S$), the Cayley graph $\Cay(G,S)$ is the graph whose vertex set is the group $G$ and whose edge set consists of pairs $(g,h) \in G\times G$ such that $hg^{-1} \in S$. As $S$ is inverse-closed, the graph $\Cay(G,S)$ is a simple graph. A subset $C$ of $G$ is a perfect code of $G$ if $C$ is a perfect code of a Cayley graph of $G$. In other words, there exists an inverse-closed subset $S$ of $G\setminus \{e\}$ such that $C$ is a perfect code of $\Cay(G,S)$. If $H\leq G$ is a perfect code of $G$, then we say that $H$ is a \emph{subgroup perfect code} of $G$. 

Perfect codes for groups have been well-studied in the past decade \cite{feng2017perfect,huang2018perfect,lee2001independent,ma2020subgroup,zhou2019cyclotomic}. For instance, Huang, Xia, and Zhou \cite{huang2018perfect} gave a necessary and sufficient condition for a normal subgroup to be a perfect code.
\begin{thm}[\cite{huang2018perfect}]
	Let $G$ be a group with identity element $e$, and let $H \triangleleft G$. Then, $H$ is a perfect code of $G$ if and only if the following formula, $\Phi(G,H)$, holds:
	\begin{align*}
	\forall g\in G\ (g^2\in H) \Rightarrow \exists h\in H, (gh)^2 = e.
	\end{align*}
	\label{lem:char1}
\end{thm}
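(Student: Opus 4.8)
The plan is to translate the graph-theoretic condition into a purely group-theoretic statement about coset representatives, and then exploit normality to pair up cosets under inversion. First I would record a reformulation valid for \emph{any} subgroup. Writing $T = S \cup \{e\}$, the closed neighbourhood of a vertex $h$ in $\Cay(G,S)$ is $N[h] = Th$, so $H$ is a perfect code of $\Cay(G,S)$ if and only if the sets $\{Th : h \in H\}$ partition $G$. Unwinding the factorization $g = th$ (so that $t = gh^{-1}$ ranges over the left coset $gH$ as $h$ ranges over $H$), this partition condition is equivalent to $|T \cap gH| = 1$ for every $g \in G$, that is, to $T$ being a left transversal of $H$ in $G$. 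Since $S$ is inverse-closed with $e \notin S$, the set $T$ is inverse-closed and contains $e$; conversely any inverse-closed left transversal $T$ with $e \in T$ yields an admissible connection set $S = T \setminus \{e\}$, the coclique condition $S \cap H = \emptyset$ being automatic since $T \cap H = \{e\}$. Thus
\[
H \text{ is a perfect code of } G \iff H \text{ admits an inverse-closed left transversal containing } e.
\]

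Next I would bring in normality. Because $H \triangleleft G$, inversion sends the left coset $gH$ to $(gH)^{-1} = Hg^{-1} = g^{-1}H$, so it induces an involution on the set of left cosets, pairing $gH$ with $g^{-1}H$. A coset is fixed by this involution precisely when $gH = g^{-1}H$, i.e.\ when $g^2 \in H$; call these the \emph{self-paired} cosets, and note that $H = eH$ is one of them.

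For the two directions I would argue as follows. For necessity, suppose $T$ is an inverse-closed left transversal containing $e$ and let $g$ satisfy $g^2 \in H$. Let $t$ be the unique element of $T \cap gH$. Since $gH$ is self-paired, $t^{-1} \in T \cap (gH)^{-1} = T \cap gH = \{t\}$, whence $t = t^{-1}$ and $t^2 = e$; writing $t = gh$ with $h \in H$ gives $(gh)^2 = e$, so $\Phi(G,H)$ holds. For sufficiency, assume $\Phi(G,H)$ and build $T$ coset by coset: for each self-paired coset $gH$ choose, via $\Phi(G,H)$, an element $t = gh$ with $t^2 = e$, which is its own inverse and lies in $gH$; for each genuine pair $\{gH, g^{-1}H\}$ with $gH \neq g^{-1}H$, pick any $t \in gH$ and put both $t$ and $t^{-1}$ into $T$. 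This $T$ meets every coset exactly once, is inverse-closed by construction, and contains $e$ (take $e$ itself as the representative of $H$). Hence $H$ is a perfect code.

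The conceptual heart --- and the only place normality is genuinely used --- is the pairing of cosets under inversion; once that is in place, the self-paired cosets are exactly the ones imposing a constraint, and $\Phi(G,H)$ says precisely that each such constraint can be met by an involution or the identity. The main technical point to handle carefully is verifying that the coset-by-coset construction in the sufficiency direction yields a genuine transversal that is \emph{globally} inverse-closed, namely that choices on distinct pairs do not interfere and that self-paired representatives are self-inverse; this is where the equivalence $gH = g^{-1}H \iff g^2 \in H$ does the work.
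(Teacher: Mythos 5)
Your proof is correct, but there is nothing in the paper to compare it against: Theorem~\ref{lem:char1} is stated without proof, being quoted from \cite{huang2018perfect}. Your argument is, however, essentially the standard (and the original) route. Its first half --- $H$ is a perfect code if and only if it has an inverse-closed left transversal, normalized to contain $e$ --- is a self-contained derivation of what the paper records separately as Lemma~\ref{carma2019} (quoted from \cite{ma2019subgroup}); your version with the extra condition $e \in T$ is the one that falls directly out of the definition of $\Cay(G,S)$, and it suffices for both directions, so you sidestep the subtlety of converting a transversal whose representative of $H$ is a nonidentity involution. The second half, pairing left cosets under inversion with $gH$ fixed exactly when $g^2 \in H$, is where normality enters, via $(gH)^{-1} = Hg^{-1} = g^{-1}H$, and both directions are sound: uniqueness of the transversal element in a self-paired coset forces it to be an involution or $e$, and conversely $\Phi(G,H)$ lets you choose representatives pair by pair, with no interference since distinct cosets are disjoint. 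One point worth making explicit in the sufficiency direction is that self-pairedness is a property of the coset rather than of the chosen representative (if $g^2 \in H$ and $g' = gh$, then $(g')^2 = g^2(g^{-1}hg)h \in H$, again by normality), so $\Phi(G,H)$ can legitimately be invoked once per self-paired coset. Your closing remark also captures exactly why the paper's main question is interesting: the sole use of normality is that $(gH)^{-1}$ is again a left coset, and it is the failure of this for non-normal $H$ that makes Question~\ref{quest:main} nontrivial and leaves room for the counterexample $H_q \leq \agl(2,q^2)$ constructed in the paper.
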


Throughout this paper, we use $G$ to denote a finite group and $e$ to denote the identity of $G$.
In \cite{wang2020perfect}, Wang, Xia, and Zhou asked the following question. 
\begin{quest}
	Does Theorem~\ref{lem:char1} still hold when $H$ is a non-normal subgroup of $G$?\label{quest:main}
\end{quest}

In this short note, we show that $\Phi(G,H)$ is no longer a necessary condition when $H$ is not normal. Consequently, we give a negative answer to Question~\ref{quest:main}. To do this,  we provide an infinite family of examples.
Fix a positive integer $n \geq 1$. Set $q=2^n$ and let $\alpha$ be a primitive element of the quadratic extension $\mathbb{F}_{q^2}/\mathbb{F}_q$. We have $\mathbb{F}_{q^2}=\mathbb{F}_q \oplus \mathbb{F}_q \alpha$.
Consider the affine group $$\mathrm{AGL}(2,q^2) := \left\{ (a,A) \mid a\in \mathbb{F}_{q^2}^2 \mbox{ and } A \in \gl_2(\mathbb{F}_{q^{2}}) \right\},$$ 
with multiplication $(a,A)(b,B) = (a+Ab,AB)$, for any $(a,A),\ (b,B) \in \agl(2,q^2)$. 

For any $T \subseteq \mathbb{F}_{q^2}$, we let $\begin{pmatrix}T \\ T \end{pmatrix}$ be the set of all vectors of $\mathbb{F}_{q^2}^2$ with entries in $T$.
Let $H_q$ be the subgroup of $\agl(2,q^2)$ given by 
\begin{align*}
H_q:=\left \lbrace (b,\mathrm{I}_2) \in \agl(2,q^2) \ \middle| \ b \in \begin{pmatrix}\mathbb{F}_q \\ \mathbb{F}_q \end{pmatrix}\right \rbrace,
\end{align*}
where $\mathrm{I}_2$ is the identity matrix. Our main result is stated as follows.
\begin{thm}
	The subgroup $H_q$ is a non-normal subgroup of $\agl(2,q^2)$ which is a perfect code but $\Phi\left(\agl(2,q^2),H_q \right)$ does not hold.\label{thm:main-result}
\end{thm}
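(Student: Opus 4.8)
The plan is to prove the three assertions in increasing order of difficulty, the genuine work lying in the perfect-code claim. Throughout, write $G = \agl(2,q^2)$, let $N = \{(a,\mathrm{I}_2) : a \in \mathbb{F}_{q^2}^2\}$ be the abelian, normal group of translations, and identify $H_q$ with the $\mathbb{F}_q$-subspace $W := \binom{\mathbb{F}_q}{\mathbb{F}_q} \le \mathbb{F}_{q^2}^2$. The basic reduction I would use is the standard characterisation of subgroup perfect codes: $H \le G$ is a perfect code of $G$ if and only if there is an inverse-closed left transversal $T$ of $H$ in $G$ with $e \in T$. Indeed, the closed neighbourhoods of the elements of $H$ in $\Cay(G, T \setminus \{e\})$ are exactly the right translates $Th$, $h \in H$, and these partition $G$ precisely when $T$ is a left transversal. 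All three parts then follow from direct computations in $G$ together with the construction of one such transversal.

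For non-normality and the failure of $\Phi$ I would simply compute. A one-line conjugation gives $(0,\alpha\mathrm{I}_2)(b,\mathrm{I}_2)(0,\alpha\mathrm{I}_2)^{-1} = (\alpha b,\mathrm{I}_2)$, and taking $b = (1,0)^\top$ produces a vector $(\alpha,0)^\top$ with a coordinate outside $\mathbb{F}_q$; hence $H_q$ is not normal. For $\Phi$, set $A = \begin{pmatrix} 1 & \alpha \\ 0 & 1\end{pmatrix}$ and $g = (a, A)$ with $a = (0,\alpha^{-1})^\top$. Since $\mathbb{F}_q$ has characteristic $2$ one has $A^2 = \mathrm{I}_2$ and $g^2 = ((\mathrm{I}_2 + A)a,\mathrm{I}_2) = ((1,0)^\top,\mathrm{I}_2) \in H_q$. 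For any $h = (b,\mathrm{I}_2) \in H_q$ a short expansion yields $(gh)^2 = ((\mathrm{I}_2+A)(a+b),\mathrm{I}_2)$, and $(\mathrm{I}_2+A)(a+b) = 0$ forces $\alpha^{-1} = b_2 \in \mathbb{F}_q$, which is impossible. Thus $g^2 \in H_q$ while $(gh)^2 \neq e$ for every $h \in H_q$, so $\Phi(G,H_q)$ fails.

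The heart of the argument is producing the inverse-closed transversal. Here I would organise the left cosets by their matrix part: the left coset of $(a,A)$ is $\{(c,A) : c \in a + AW\}$, and $(c,A)^{-1} = (A^{-1}c,A^{-1})$. Thus it suffices to choose, for every $A \in \gl_2(\mathbb{F}_{q^2})$, an $\mathbb{F}_q$-subspace $W_A' \le \mathbb{F}_{q^2}^2$ such that (i) $\mathbb{F}_{q^2}^2 = AW \oplus W_A'$ and (ii) $W_{A^{-1}}' = A^{-1}W_A'$; the set $T = \{(w',A) : A \in \gl_2(\mathbb{F}_{q^2}),\ w' \in W_A'\}$ is then automatically an inverse-closed left transversal containing $e = (0,\mathrm{I}_2)$. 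For each pair $\{A, A^{-1}\}$ with $A \neq A^{-1}$ I would let $W_A'$ be a common complement of the two equal-dimensional subspaces $W$ and $AW$ (such a common complement always exists over any field), and then set $W_{A^{-1}}' := A^{-1}W_A'$; since $W_A'$ complements $W$, its image $A^{-1}W_A'$ complements $A^{-1}W$, so both (i) and (ii) hold.

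The main obstacle is the case $A = A^{-1}$, i.e. the involutions (including $\mathrm{I}_2$), where (ii) forces $W_A'$ to be $A$-invariant. Applying the bijection $A$ to (i) shows that an $A$-invariant complement of $AW$ is the same thing as an $A$-invariant complement of $W$, so the task becomes: given an involution $A$, find an $A$-invariant $\mathbb{F}_q$-subspace $U$ of dimension $2$ with $\mathbb{F}_{q^2}^2 = W \oplus U$. Writing $A = \mathrm{I}_2 + B$, in characteristic $2$ the condition $A^2 = \mathrm{I}_2$ is equivalent to $B^2 = 0$, so either $B = 0$ (and $U = \alpha W$ works) or $B$ has rank one with $\ker B = \operatorname{Im} B =: \ell$, a one-dimensional $\mathbb{F}_{q^2}$-subspace; then $A$-invariance of $U$ amounts to $BU \subseteq U$. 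A short analysis of this square-zero map shows that the \emph{only} obstruction to an invariant complement of $W$ is $W = \ell$: if $W \cap \ell = 0$ one may take $U = \ell$, and otherwise one builds $U = \langle y, By\rangle_{\mathbb{F}_q}$ for a suitably chosen $y \notin \ell + W$. The crucial point is that $W = \ell$ cannot occur, because $\ell = \ker B$ is stable under scalar multiplication by $\alpha$ whereas $W = \binom{\mathbb{F}_q}{\mathbb{F}_q}$ is not. This produces the invariant complements for all involutions, completes the transversal $T$, and hence shows that $H_q$ is a perfect code. I expect this involution case — in particular pinning down that the sole obstruction is the excluded configuration $W = \ker B$ — to be the step requiring the most care.
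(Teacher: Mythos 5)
Your proposal is correct, but on the central claim it takes a genuinely different route from the paper. For the two computational assertions the differences are cosmetic: the paper proves non-normality by determining the whole normalizer $N_{\agl(2,q^2)}(H_q)=\left\{(a,A) \mid a\in\mathbb{F}_{q^2}^2,\ A\in\gl_2(\mathbb{F}_q)\right\}$ (Lemma~\ref{lem:normalizer}), where you exhibit a single conjugation; and your witness for the failure of $\Phi$ has the same matrix part $\left(\begin{smallmatrix}1&\alpha\\0&1\end{smallmatrix}\right)$ as the paper's, only with a different translation part. The genuine divergence is the proof that $H_q$ is a perfect code. The paper invokes Lemma~\ref{lem:char2} (Zhang--Zhou): since $H_q$ is a $2$-group, it is a perfect code if and only if $\Phi\left(N_{\agl(2,q^2)}(H_q),H_q\right)$ holds, and it then verifies that condition by a case analysis on involutions of $\gl_2(\mathbb{F}_q)$. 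You instead build an inverse-closed left transversal directly (via Lemma~\ref{carma2019}), organizing cosets by matrix part: a common complement of $W$ and $AW$ for each pair $\{A,A^{-1}\}$ with $A\neq A^{-1}$, and an $A$-invariant complement of $W$ for each involution $A$, the key observation being that $\ell=\ker(A+\mathrm{I}_2)$ is an $\mathbb{F}_{q^2}$-line, hence closed under multiplication by $\alpha$, and so never equals $W$. Your route is more elementary and self-contained: it uses only the transversal characterization (which you even justify) and produces the perfect-code structure explicitly, whereas the paper's argument is much shorter because Lemma~\ref{lem:char2} --- a nontrivial result whose original proof contained an error, as the paper remarks --- does the heavy lifting.

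One step needs more care than your wording suggests. In the involution case with $W\cap\ell\neq 0$ and $W\neq\ell$, the condition $y\notin \ell+W$ alone is not enough: it can happen that $By\in W\cap\ell$, and then $\langle y,By\rangle_{\mathbb{F}_q}$ meets $W$. Concretely, take $\ell$ spanned over $\mathbb{F}_{q^2}$ by $e_1$, let $B$ be the square-zero map with $Be_1=0$ and $Be_2=e_1$, and let $W=\langle e_1,\alpha e_2\rangle_{\mathbb{F}_q}$; then $y=e_2$ lies outside $\ell+W$, yet $By=e_1\in W$, so $\langle y, By\rangle_{\mathbb{F}_q}\cap W\neq 0$. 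The fix is to require $y\notin(\ell+W)\cup B^{-1}(W\cap\ell)$; both sets are proper $\mathbb{F}_q$-subspaces of $\mathbb{F}_{q^2}^2$ (each of dimension $3$), and a vector space is never the union of two proper subspaces, so such a $y$ exists and your argument then goes through verbatim. This is a repair within your stated strategy (the ``suitably chosen $y$''), not a gap in the approach.
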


\section{Proof of Theorem~\ref{thm:main-result}}\label{section:counterexamples}

%%%%%%%%%%%%%%%%%%%%%%%%%%%%%%%%%%%%%%%%%%%%%%%%%%
\subsection{Main lemmas}
We recall that when $G$ is a group and $H$ is a subgroup of $G$, then a subset $S \subset G$ is a \emph{left transversal} of $H$ in $G$ if for any $g\in G$, we have $|gH \cap S| = 1$. A few general characterizations of subgroup perfect codes are given next.

\begin{lem}[\cite{ma2019subgroup}]
	Let $G$ be a group and $H \leq G$. Then, $H$ is a perfect code of $G$ if and only if $H$ has an inverse-closed left transversal.
	\label{carma2019}
\end{lem}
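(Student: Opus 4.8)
The plan is to translate both the notion of a perfect code and that of an inverse-closed left transversal into a single combinatorial statement about partitions of $G$, and then match them up. Recall that a subset $C$ of the vertex set of a graph is a perfect code precisely when the closed neighbourhoods $\{N[c] : c \in C\}$ partition the vertex set. In the Cayley graph $\Cay(G,S)$ with inverse-closed $S \subseteq G\setminus\{e\}$, adjacency is governed by right multiplication, so the neighbours of a vertex $h$ form the set $Sh$ and its closed neighbourhood is $N[h] = (\{e\}\cup S)h$. Writing $T := \{e\}\cup S$, the condition that $H$ is a perfect code of $\Cay(G,S)$ thus becomes: the sets $\{Th : h\in H\}$ partition $G$, equivalently the multiplication map $T\times H \to G$, $(t,h)\mapsto th$, is a bijection. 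The first step is to record this reformulation carefully.

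Next I would observe that bijectivity of $T\times H\to G$ is exactly the statement that $T$ meets every left coset $gH$ in a single point, i.e. $|gH\cap T| = 1$ for all $g\in G$, so that $T$ is a left transversal of $H$. Indeed, solving $g = th$ with $h\in H$ forces $t = gh^{-1}\in gH$, so the number of preimages of $g$ equals $|T\cap gH|$. This settles the forward direction immediately: if $H$ is a perfect code of some $\Cay(G,S)$, then $T = \{e\}\cup S$ is inverse-closed (as $S$ is inverse-closed and $e^{-1}=e$) and is a left transversal of $H$, which is the desired object.

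For the converse I start from an arbitrary inverse-closed left transversal $T$ and must produce a valid connection set, namely an inverse-closed $S$ with $e\notin S$ for which $\{e\}\cup S$ is a transversal. The naive choice $S = T\setminus\{e\}$ succeeds only when $e\in T$, which is not guaranteed, and arranging $e\in T$ is the main obstacle. Let $t_0$ be the unique element of $T\cap H$, i.e. the representative of the coset $H$. Since $T$ is inverse-closed and $H$ is a subgroup, $t_0^{-1}\in T\cap H$ as well, and uniqueness forces $t_0^{-1}=t_0$, so $t_0$ is an involution or the identity. I would then replace $t_0$ by $e$, setting $T' := (T\setminus\{t_0\})\cup\{e\}$. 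Because $t_0$ is its own inverse, deleting it preserves inverse-closedness, and $e$ is self-inverse, so $T'$ remains an inverse-closed left transversal, now containing $e$. Finally $S := T'\setminus\{e\}$ is an inverse-closed subset of $G\setminus\{e\}$ with $\{e\}\cup S = T'$ a transversal, so by the reformulation of the first paragraph $H$ is a perfect code of $\Cay(G,S)$. The only genuinely delicate point is this involution argument, which lets us normalise the transversal to contain $e$; every other step is a direct unwinding of the definitions.
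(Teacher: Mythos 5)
Your proof is correct. Note that the paper itself contains no proof of Lemma~\ref{carma2019}: it is quoted from the reference \cite{ma2019subgroup}, so there is no internal argument to compare against, and yours is essentially the standard proof from that source. Your two reductions are both sound: $H$ is a perfect code of $\Cay(G,S)$ precisely when the closed neighbourhoods $(\{e\}\cup S)h$, $h\in H$, partition $G$, which unwinds (via $g=th \Leftrightarrow t=gh^{-1}\in gH$) to $\{e\}\cup S$ being a left transversal of $H$; and the one genuinely nontrivial step in the converse --- that the unique $t_0\in T\cap H$ satisfies $t_0^{-1}\in T\cap H$ and hence $t_0^2=e$, so it may be exchanged for $e$ without harming either inverse-closedness or the transversal property (the latter because $e\in t_0H=H$, a point you use implicitly and which is worth stating explicitly) --- is exactly the normalisation that makes the lemma work. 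With that swap, $S=T'\setminus\{e\}$ is a legitimate connection set and the first reduction finishes the argument, so nothing is missing.
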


\begin{lem}\cite[Corollary~3.3]{zhang2021subgroup}
	Let $G$ be a group and let $H\leq G$ be a $2$-group. Then, $H$ is a perfect code of $G$ if and only if $\Phi(N_{G}(H),H)$ holds, where $N_G(H)$ is the normalizer of $H$ in $G$.\label{lem:char2}
\end{lem}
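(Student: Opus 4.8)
The plan is to prove the three assertions separately, doing all arithmetic in the characteristic-$2$ field $\mathbb{F}_{q^2}$ so that signs disappear. The driving preliminary is the conjugation formula: since $(a,A)^{-1} = (A^{-1}a, A^{-1})$, a short computation gives, for $(a,A) \in \agl(2,q^2)$ and $(b,\mathrm{I}_2) \in H_q$,
\begin{align*}
(a,A)(b,\mathrm{I}_2)(a,A)^{-1} = (Ab, \mathrm{I}_2),
\end{align*}
so conjugation acts on $H_q$ through the linear part $A$ alone, via $b \mapsto Ab$, and is insensitive to the translation $a$. Writing $V := \begin{pmatrix}\mathbb{F}_q \\ \mathbb{F}_q\end{pmatrix}$, non-normality is then immediate: taking $A = \alpha\,\mathrm{I}_2$ and $b = (1,0)^T \in V$ yields $(\alpha b,\mathrm{I}_2) = ((\alpha,0)^T,\mathrm{I}_2) \notin H_q$ because $\alpha \notin \mathbb{F}_q$. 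Hence $H_q \not\trianglelefteq \agl(2,q^2)$.

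For the perfect-code assertion I would invoke Lemma~\ref{lem:char2}, which applies because $H_q \cong (\mathbb{F}_q,+)^2$ is an elementary abelian $2$-group. The conjugation formula shows $(a,A)$ normalizes $H_q$ exactly when $AV = V$, i.e. (inspecting columns) when $A \in \gl_2(\mathbb{F}_q)$, with $a$ arbitrary; thus $N := N_{\agl(2,q^2)}(H_q) = \{(a,A) : a \in \mathbb{F}_{q^2}^2,\ A \in \gl_2(\mathbb{F}_q)\}$. It remains to verify $\Phi(N,H_q)$. Given $g = (a,A) \in N$ with $g^2 = ((\mathrm{I}_2+A)a, A^2) \in H_q$, we have $A^2 = \mathrm{I}_2$ and $(\mathrm{I}_2+A)a \in V$; for $h = (c,\mathrm{I}_2) \in H_q$ a direct computation (using $A^2 = \mathrm{I}_2$ in characteristic $2$) gives $(gh)^2 = ((\mathrm{I}_2+A)(a+c),\,\mathrm{I}_2)$, so I need $c \in V$ with $(\mathrm{I}_2+A)c = (\mathrm{I}_2+A)a$. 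Here I would exploit that $A$, having entries in $\mathbb{F}_q$, commutes with scalar multiplication by $\alpha$ and so respects the $\mathbb{F}_q$-decomposition $\mathbb{F}_{q^2}^2 = V \oplus \alpha V$. Writing $a = a_0 + \alpha a_1$ with $a_0,a_1 \in V$, the hypothesis $(\mathrm{I}_2+A)a \in V$ forces $(\mathrm{I}_2+A)a_1 = 0$, whence $c := a_0 \in V$ solves the equation. Thus $\Phi(N,H_q)$ holds and $H_q$ is a perfect code.

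To show $\Phi(\agl(2,q^2),H_q)$ fails, I would exhibit one witness $g = (a,A)$ with $A = \begin{pmatrix} 1 & \alpha \\ 0 & 1\end{pmatrix}$ and $a = (0,\alpha^{-1})^T$. Then $A^2 = \mathrm{I}_2$ and $(\mathrm{I}_2+A)a = \begin{pmatrix} 0 & \alpha \\ 0 & 0\end{pmatrix}(0,\alpha^{-1})^T = (1,0)^T \in V$, so $g^2 \in H_q$. Yet for any $h = (c,\mathrm{I}_2)$ with $c = (c_1,c_2)^T \in V$, the same identity $(gh)^2 = ((\mathrm{I}_2+A)(a+c),\mathrm{I}_2)$ shows that $(gh)^2 = e$ would require $(\mathrm{I}_2+A)c = (1,0)^T$, i.e. $\alpha c_2 = 1$ with $c_2 \in \mathbb{F}_q$ — impossible, since $\alpha c_2 \in \mathbb{F}_q\alpha$ meets $\mathbb{F}_q$ only at $0$. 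Hence no $h \in H_q$ satisfies $(gh)^2 = e$, so $\Phi$ fails.

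The main obstacle, and the crux of the whole argument, is the perfect-code step: correctly identifying $N$ (the vanishing of the translation part in the conjugation formula is what makes $N$ manageable) and then recognizing that the decomposition $\mathbb{F}_{q^2}^2 = V \oplus \alpha V$ permits solving $(\mathrm{I}_2+A)c = (\mathrm{I}_2+A)a$ over $V$ precisely because $A \in \gl_2(\mathbb{F}_q)$ is compatible with it. The failing case is engineered to break exactly this compatibility: an off-diagonal $\alpha$ in $A$ destroys the $A$-invariance of the $\mathbb{F}_q$-decomposition, rendering the identical linear equation unsolvable over $V$. Verifying this contrast cleanly — the same formula $(gh)^2 = ((\mathrm{I}_2+A)(a+c),\mathrm{I}_2)$ governs both the success over $N$ and the failure over $\agl(2,q^2)$ — is where I expect the care to be needed.
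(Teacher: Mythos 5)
You have proved the wrong statement. The statement under review is Lemma~\ref{lem:char2}: for an \emph{arbitrary} finite group $G$ and an \emph{arbitrary} $2$-subgroup $H \leq G$, $H$ is a perfect code of $G$ if and only if $\Phi(N_G(H),H)$ holds. This is a general group-theoretic reduction, which the paper quotes from Zhang and Zhou \cite[Corollary~3.3]{zhang2021subgroup} without proof; its entire substance is the passage from $G$ down to the normalizer, i.e., that a $2$-subgroup $H$ is a perfect code of $G$ exactly when it is a perfect code of $N_G(H)$ — at which point Theorem~\ref{lem:char1} applies because $H \trianglelefteq N_G(H)$. Any proof must therefore handle arbitrary $G$, for instance via Lemma~\ref{carma2019}, by showing (roughly) that an inverse-closed left transversal of $H$ in $N_G(H)$ can be extended across the double cosets $HgH$ with $g \notin N_G(H)$; this is where the $2$-group hypothesis does its work, and it is exactly the delicate step on which the original argument of \cite{zhang2020subgroup} broke and had to be repaired in \cite{zhang2021subgroup}. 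Your proposal contains none of this. Indeed, it explicitly invokes the target statement as a black box (``For the perfect-code assertion I would invoke Lemma~\ref{lem:char2}''), so as an attempt at the assigned statement it is circular, and the missing idea is the whole lemma.

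What you actually wrote is a proof of the paper's main result, Theorem~\ref{thm:main-result}, and as such it checks out and is in places cleaner than the paper's own: your conjugation formula and determination of the normalizer agree with Lemma~\ref{lem:normalizer}; your verification of $\Phi(N_{\agl(2,q^2)}(H_q),H_q)$ via the $A$-invariant decomposition $\mathbb{F}_{q^2}^2 = V \oplus \alpha V$ (legitimate precisely because $A \in \gl_2(\mathbb{F}_q)$) replaces the paper's two-case analysis on $\tr(A)$ and $\det(A)$ by one uniform argument; and your witness $g$ with $A = \begin{pmatrix} 1 & \alpha \\ 0 & 1 \end{pmatrix}$ and $a = \begin{pmatrix} 0 \\ \alpha^{-1} \end{pmatrix}$ for the failure of $\Phi\left(\agl(2,q^2),H_q\right)$ avoids the minimal-polynomial bookkeeping with $s$ and $t$. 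But none of this bears on Lemma~\ref{lem:char2} itself, whose proof lives in \cite{zhang2021subgroup}, not in any computation inside $\agl(2,q^2)$.
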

We note that a much stronger statement than Lemma~\ref{lem:char2} was first proved in \cite[Theorem~3.1, Theorem~3.2]{zhang2020subgroup}, however the proof contained an error. This was subsequently corrected in \cite{zhang2021subgroup}.
%%%%%%%%%%%%%%%%%%%%%%%%%%%%%%%%%%%%%%%%%%%%%%%%%%

\subsection{Proof of the main theorem}
We first note that there exists $s \in \mathbb{F}_q$ and $t \in \mathbb{F}_q^*$ such that $\alpha^2+s\alpha+t=0$, or equivalently, $\alpha^2=s\alpha+t$.

\begin{lem}
	The property $\Phi\left(\agl(2,q^2),H_q \right)$ does not hold.
\end{lem}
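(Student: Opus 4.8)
The plan is to disprove $\Phi\left(\agl(2,q^2),H_q\right)$ by exhibiting a single witness $g=(a,A)\in\agl(2,q^2)$ with $g^2\in H_q$ but $(gh)^2\neq e$ for every $h\in H_q$. First I would unpack both squares using the multiplication rule $(x,X)(y,Y)=(x+Xy,XY)$. One gets $g^2=\left((\mathrm{I}_2+A)a,\,A^2\right)$, so $g^2\in H_q$ forces $A^2=\mathrm{I}_2$ together with $(\mathrm{I}_2+A)a\in\begin{pmatrix}\mathbb{F}_q\\\mathbb{F}_q\end{pmatrix}$. For $h=(b,\mathrm{I}_2)$ with $b\in\begin{pmatrix}\mathbb{F}_q\\\mathbb{F}_q\end{pmatrix}$ one has $gh=(a+Ab,A)$ and hence $(gh)^2=\left((\mathrm{I}_2+A)(a+Ab),\,A^2\right)$. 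The crucial simplification is that in characteristic $2$ with $A^2=\mathrm{I}_2$ we have $(\mathrm{I}_2+A)A=A+A^2=\mathrm{I}_2+A$, so writing $N:=\mathrm{I}_2+A$ the translation part collapses to $N(a+b)$. Thus $(gh)^2=e$ if and only if $N(a+b)=0$.

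This reduces the whole problem to linear algebra over $\mathbb{F}_{q^2}$: I need a matrix $N$ with $N^2=0$ (equivalently $A^2=\mathrm{I}_2$) and a vector $a$ such that $Na\in\begin{pmatrix}\mathbb{F}_q\\\mathbb{F}_q\end{pmatrix}$ (so that $g^2\in H_q$) while $Na\notin N\begin{pmatrix}\mathbb{F}_q\\\mathbb{F}_q\end{pmatrix}$ (so that $Nb\neq Na$, hence $N(a+b)\neq0$, for all admissible $b$). Concretely I would take
\[
A=\begin{pmatrix}1&0\\\alpha&1\end{pmatrix},\qquad N=\mathrm{I}_2+A=\begin{pmatrix}0&0\\\alpha&0\end{pmatrix},\qquad a=\begin{pmatrix}\alpha^{-1}\\0\end{pmatrix}.
\]
Then $N^2=0$ (so $A^2=\mathrm{I}_2$ and $A\in\gl_2(\mathbb{F}_{q^2})$ since $\det A=1$), and $Na=\begin{pmatrix}0\\1\end{pmatrix}\in\begin{pmatrix}\mathbb{F}_q\\\mathbb{F}_q\end{pmatrix}$, giving $g^2\in H_q$. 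On the other hand $N\begin{pmatrix}\mathbb{F}_q\\\mathbb{F}_q\end{pmatrix}=\{0\}\times\alpha\mathbb{F}_q$, and the verification that $Na\notin N\begin{pmatrix}\mathbb{F}_q\\\mathbb{F}_q\end{pmatrix}$ amounts to checking $1\notin\alpha\mathbb{F}_q$, i.e. that no $b_1\in\mathbb{F}_q$ satisfies $\alpha b_1=1$; this is immediate because $\alpha\notin\mathbb{F}_q$. Equivalently, $N(a+b)=\begin{pmatrix}0\\1+\alpha b_1\end{pmatrix}$ never vanishes for $b_1\in\mathbb{F}_q$.

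The one genuine subtlety, and the step I would flag as the main obstacle, is the choice of $N$: the two requirements $Na\in\begin{pmatrix}\mathbb{F}_q\\\mathbb{F}_q\end{pmatrix}$ and $Na\notin N\begin{pmatrix}\mathbb{F}_q\\\mathbb{F}_q\end{pmatrix}$ pull against each other, and the naive rank-one nilpotent $\begin{pmatrix}0&1\\0&0\end{pmatrix}$ makes them incompatible, since without a scalar twist the map merely reads off a coordinate defined over $\mathbb{F}_q$. What rescues the construction is the quadratic extension: scaling by $\alpha\in\mathbb{F}_{q^2}\setminus\mathbb{F}_q$ lets $Na$ land inside $\begin{pmatrix}\mathbb{F}_q\\\mathbb{F}_q\end{pmatrix}$ while escaping its image $\{0\}\times\alpha\mathbb{F}_q$, the final verification resting on the elementary fact $\alpha\mathbb{F}_q\cap\mathbb{F}_q=\{0\}$. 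Once the witness is in hand the argument is a direct computation, so the work is entirely in identifying $A$ and $a$ and in the clean reduction $(gh)^2=\left(N(a+b),\mathrm{I}_2\right)$.
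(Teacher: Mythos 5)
Your proof is correct, and it follows the same basic strategy as the paper: exhibit an explicit witness $g$ whose linear part is a transvection involving $\alpha$, check $g^2\in H_q$, and rule out $(gh)^2=e$ for all $h\in H_q$ using the fact that $1$ and $\alpha$ are linearly independent over $\mathbb{F}_q$. The differences are in execution, and yours is arguably cleaner on two counts. First, the paper takes $g=\left(\begin{pmatrix}0\\ \alpha+s\end{pmatrix},\begin{pmatrix}1&\alpha\\0&1\end{pmatrix}\right)$ and its computations run through the minimal-polynomial relation $\alpha^2=s\alpha+t$, the nonvanishing of $(gh)^2=\left(\begin{pmatrix}v\alpha+t\\0\end{pmatrix},\mathrm{I}_2\right)$ resting on $t\neq 0$ together with that same linear independence; your choice $a=\begin{pmatrix}\alpha^{-1}\\0\end{pmatrix}$ sidesteps the minimal polynomial entirely, so the only arithmetic input is $\alpha\notin\mathbb{F}_q$. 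Second, your reduction $(gh)^2=\left(N(a+b),\mathrm{I}_2\right)$ with $N=\mathrm{I}_2+A$ (valid because $(\mathrm{I}_2+A)A=A+A^2=\mathrm{I}_2+A$ in characteristic $2$ when $A^2=\mathrm{I}_2$) is exactly the identity $(gh)^2=\left((A+\mathrm{I}_2)(a+b),\mathrm{I}_2\right)$ that the paper derives later, in the proof that $H_q$ \emph{is} a perfect code, but does not exploit in this lemma, where it instead squares $gh$ by direct multiplication. Your reformulation of the task as finding $a$ with $Na\in\begin{pmatrix}\mathbb{F}_q\\ \mathbb{F}_q\end{pmatrix}$ but $Na\notin N\begin{pmatrix}\mathbb{F}_q\\ \mathbb{F}_q\end{pmatrix}$ isolates the real content of the counterexample and explains \emph{why} the twist by $\alpha$ is needed (as you observe, the untwisted nilpotent admits no such $a$); the paper verifies the same phenomenon but leaves that mechanism implicit.
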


\begin{proof}
	Let $g= \left( \begin{pmatrix} 0 \\ \alpha +s \end{pmatrix}, \begin{pmatrix} 1 & \alpha \\ 0 & 1 \end{pmatrix} \right) \in \agl(2,q^2)$. We have
	$$
	g^2 =\left( \begin{pmatrix} \alpha^2+s \alpha \\ 0 \end{pmatrix}, \mathrm{I}_2 \right)=\left( \begin{pmatrix} s \alpha +t+s \alpha \\ 0 \end{pmatrix}, \mathrm{I}_2 \right)=\left( \begin{pmatrix} t \\ 0 \end{pmatrix}, \mathrm{I}_2 \right) \in H_q.
	$$
	Let $h= \left( \begin{pmatrix} u \\ v \end{pmatrix}, \mathrm{I}_2 \right) \in H_q$ with $u,v \in \mathbb{F}_q$.  As $t \neq 0$, we have
	\begin{align*}
	(gh)^2&=  \left [\left ( \begin{pmatrix}0 \\ \alpha +s \end{pmatrix}, \begin{pmatrix} 1 & \alpha \\ 0 & 1 \end{pmatrix} \right) \left ( \begin{pmatrix} u \\ v \end{pmatrix}, \begin{pmatrix} 1 & 0 \\ 0 & 1 \end{pmatrix} \right) \right]^2\\
	&= \left ( \begin{pmatrix} u + v \alpha \\ \alpha + s+ v \end{pmatrix}, \begin{pmatrix} 1 & \alpha \\ 0 & 1 \end{pmatrix} \right)^2\\
	&= \left ( \begin{pmatrix} v \alpha +t \\ 0 \end{pmatrix}, \begin{pmatrix}1 & 0 \\ 0 & 1 \end{pmatrix} \right) \neq (0, \mathrm{I}_2).
	\end{align*}
	Consequently, $\Phi\left(\agl(2,q^2),H_q \right)$ does not hold.
\end{proof}

\begin{lem}
	The normalizer of $H_q$ in $\agl(2,q^2)$ is given by:
	$$
	N_{\agl(2,q^2)}(H_q)= \left \lbrace (a,A) \mid a \in \mathbb{F}_{q^2}^2, A \in \gl_2(\mathbb{F}_q)  \right \rbrace.
	$$\label{lem:normalizer}
\end{lem}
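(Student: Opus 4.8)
The plan is to compute the normalizer directly by determining which elements $(a,A)\in\agl(2,q^2)$ conjugate $H_q$ into itself. The key observation is that $H_q$ consists precisely of the translations $(b,\mathrm{I}_2)$ whose vector part $b$ lies in the $\mathbb{F}_q$-subspace $\begin{pmatrix}\mathbb{F}_q\\\mathbb{F}_q\end{pmatrix}$ of $\mathbb{F}_{q^2}^2$. First I would record the conjugation formula: using the multiplication rule $(a,A)(b,B)=(a+Ab,AB)$ together with $(a,A)^{-1}=(-A^{-1}a,A^{-1})$, a short computation gives
\begin{align*}
(a,A)(b,\mathrm{I}_2)(a,A)^{-1}=(Ab,\mathrm{I}_2).
\end{align*}
Thus conjugation by $(a,A)$ acts on the translation part $b$ by the linear map $A$, and it is completely independent of the vector part $a$. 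In particular the second coordinate $a$ is irrelevant, so every element with an arbitrary $a\in\mathbb{F}_{q^2}^2$ is permitted, which already explains the shape of the claimed answer.

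The remaining task is therefore purely linear-algebraic: I would show that $A\begin{pmatrix}\mathbb{F}_q\\\mathbb{F}_q\end{pmatrix}=\begin{pmatrix}\mathbb{F}_q\\\mathbb{F}_q\end{pmatrix}$ if and only if $A\in\gl_2(\mathbb{F}_q)$. One inclusion is immediate: if $A$ has all entries in $\mathbb{F}_q$ then it clearly maps $\mathbb{F}_q$-vectors to $\mathbb{F}_q$-vectors, and being invertible over $\mathbb{F}_{q^2}$ with $\mathbb{F}_q$-entries it is invertible over $\mathbb{F}_q$ as well (its determinant lies in $\mathbb{F}_q^*$), so the image is all of $\begin{pmatrix}\mathbb{F}_q\\\mathbb{F}_q\end{pmatrix}$. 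For the converse I would use the basis $\{1,\alpha\}$ of $\mathbb{F}_{q^2}$ over $\mathbb{F}_q$ and test $A$ on the standard vectors $\begin{pmatrix}1\\0\end{pmatrix}$ and $\begin{pmatrix}0\\1\end{pmatrix}$: the condition that both images lie in $\begin{pmatrix}\mathbb{F}_q\\\mathbb{F}_q\end{pmatrix}$ forces each of the four entries of $A$ to have zero $\alpha$-component, i.e.\ to lie in $\mathbb{F}_q$.

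The main obstacle, such as it is, is bookkeeping rather than conceptual: I must be careful that the normalizer condition $(a,A)H_q(a,A)^{-1}=H_q$ really reduces to the equality of sets $A\begin{pmatrix}\mathbb{F}_q\\\mathbb{F}_q\end{pmatrix}=\begin{pmatrix}\mathbb{F}_q\\\mathbb{F}_q\end{pmatrix}$ (not merely containment in one direction), and that the independence from $a$ is genuinely exploited so that no constraint on $a$ sneaks in. Since $\begin{pmatrix}\mathbb{F}_q\\\mathbb{F}_q\end{pmatrix}$ is a finite set and $A$ acts injectively, containment $A\begin{pmatrix}\mathbb{F}_q\\\mathbb{F}_q\end{pmatrix}\subseteq\begin{pmatrix}\mathbb{F}_q\\\mathbb{F}_q\end{pmatrix}$ already forces equality, which streamlines the argument. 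Assembling these pieces yields exactly
\begin{align*}
N_{\agl(2,q^2)}(H_q)=\left\lbrace (a,A)\mid a\in\mathbb{F}_{q^2}^2,\ A\in\gl_2(\mathbb{F}_q)\right\rbrace,
\end{align*}
as claimed.
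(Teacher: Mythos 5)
Your proposal is correct and follows essentially the same route as the paper: both compute the conjugation formula $(a,A)(b,\mathrm{I}_2)(a,A)^{-1}=(Ab,\mathrm{I}_2)$, test $A$ on the standard basis vectors to force $A\in\gl_2(\mathbb{F}_q)$, and observe the easy converse. Your explicit remark that containment $A\begin{pmatrix}\mathbb{F}_q\\\mathbb{F}_q\end{pmatrix}\subseteq\begin{pmatrix}\mathbb{F}_q\\\mathbb{F}_q\end{pmatrix}$ forces equality by finiteness and injectivity is a nice touch of rigor that the paper leaves implicit, but it does not change the substance of the argument.
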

\begin{proof}
	For any $g=(a,A) \in \agl(2,q^2)$ and $h=(b, \mathrm{I}_2) \in H_q$, we have
	\begin{equation}\label{equ:12ga}
	ghg^{-1} =(a,A)(b,\mathrm{I}_2)(a,A)^{-1}=(Ab, \mathrm{I}_2).
	\end{equation}
	Let $g = (a,A) \in N_{\agl(2,q^2)}(H_q)$, where $A = \begin{pmatrix} u & v \\ w & z  \end{pmatrix}$. Since $g\in N_{\agl(2,q^2)}(H_q)$, we know that $g (b,\mathrm{I}_2) g^{-1} = (Ab,\mathrm{I}_2) \in H_q$, for $(b,I_2) \in H_q$ (see \eqref{equ:12ga}). In particular, 
	\begin{align*}
	Ab \in \begin{pmatrix} \mathbb{F}_q \\ \mathbb{F}_q \end{pmatrix}, \mbox{ for } b \in \left\{\begin{pmatrix} 1 \\ 0 \end{pmatrix},\begin{pmatrix} 0 \\ 1 \end{pmatrix} \right\}.
	\end{align*}
	Therefore, the columns of $A$ are elements of $\begin{pmatrix} \mathbb{F}_q \\ \mathbb{F}_q \end{pmatrix}$ and so $A \in \gl_2(\mathbb{F}_q)$. In other words, $N_{\agl(2,q^2)}(H_q)\subseteq  \left \lbrace (a,A) \mid a \in \mathbb{F}_{q^2}^2, A \in \gl_2(\mathbb{F}_q)  \right \rbrace$.
	
	Conversely, if $A \in \gl_2(\mathbb{F}_q)$ and $a\in \mathbb{F}_{q^2}^2$ then, by \eqref{equ:12ga}, it is easy to see that $(a,A) \in N_{\agl(2,q^2)}(H)$. This completes the proof.
\end{proof}

An immediate consequence of Lemma~\ref{lem:normalizer} is that $H_q$ is not a normal subgroup of $\agl(2,q^2)$.

\begin{thm}
	The subgroup $H_q$ of $\agl(2,q^2)$ is a perfect code.
\end{thm}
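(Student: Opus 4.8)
The plan is to exploit the fact that $H_q$ is a $2$-group and invoke Lemma~\ref{lem:char2}, which reduces the problem from verifying the perfect-code property (or equivalently building an inverse-closed transversal as in Lemma~\ref{carma2019}) to checking the formula $\Phi$ only on the normalizer. First I would observe that since $q = 2^n$, the group $H_q$ is isomorphic to the additive group $\left(\begin{pmatrix}\mathbb{F}_q \\ \mathbb{F}_q\end{pmatrix}, +\right)$, an elementary abelian $2$-group of order $q^2 = 2^{2n}$; hence $H_q$ is a $2$-group and Lemma~\ref{lem:char2} applies. It then suffices to show that $\Phi(N, H_q)$ holds, where $N := N_{\agl(2,q^2)}(H_q) = \left\{(a,A) \mid a \in \mathbb{F}_{q^2}^2,\ A \in \gl_2(\mathbb{F}_q)\right\}$ by Lemma~\ref{lem:normalizer}.

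Next I would take an arbitrary $g = (a,A) \in N$ satisfying $g^2 \in H_q$ and unwind this hypothesis. Since $g^2 = (a + Aa, A^2)$, membership in $H_q$ forces $A^2 = \mathrm{I}_2$ together with $c := (\mathrm{I}_2 + A)a \in \begin{pmatrix}\mathbb{F}_q \\ \mathbb{F}_q\end{pmatrix}$ (using characteristic $2$, so $a + Aa = (\mathrm{I}_2 + A)a$). For $h = (b,\mathrm{I}_2) \in H_q$ with $b \in \begin{pmatrix}\mathbb{F}_q \\ \mathbb{F}_q\end{pmatrix}$, a short computation relying on $A^2 = \mathrm{I}_2$ gives $(gh)^2 = \left((\mathrm{I}_2 + A)(a+b),\ \mathrm{I}_2\right)$. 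Thus finding $h$ with $(gh)^2 = e$ amounts to finding $b \in \begin{pmatrix}\mathbb{F}_q \\ \mathbb{F}_q\end{pmatrix}$ with $(\mathrm{I}_2 + A)b = c$.

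The crux of the argument is a descent. Because $A \in \gl_2(\mathbb{F}_q)$, the matrix $\mathrm{I}_2 + A$ has entries in $\mathbb{F}_q$ and therefore respects the $\mathbb{F}_q$-decomposition $\mathbb{F}_{q^2}^2 = \begin{pmatrix}\mathbb{F}_q \\ \mathbb{F}_q\end{pmatrix} \oplus \alpha\begin{pmatrix}\mathbb{F}_q \\ \mathbb{F}_q\end{pmatrix}$. Writing $a = a_0 + \alpha a_1$ with $a_0, a_1 \in \begin{pmatrix}\mathbb{F}_q \\ \mathbb{F}_q\end{pmatrix}$, I get $c = (\mathrm{I}_2 + A)a_0 + \alpha(\mathrm{I}_2 + A)a_1$; comparing $\mathbb{F}_q$-components and using $c \in \begin{pmatrix}\mathbb{F}_q \\ \mathbb{F}_q\end{pmatrix}$ forces $(\mathrm{I}_2 + A)a_0 = c$. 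Hence $b := a_0$ solves $(\mathrm{I}_2 + A)b = c$, which yields $(gh)^2 = e$ and establishes $\Phi(N, H_q)$; Lemma~\ref{lem:char2} then gives the theorem.

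The step I expect to be the main obstacle is precisely this descent: a priori $c$ lies in the image of $\mathrm{I}_2 + A$ only as an operator on the larger space $\mathbb{F}_{q^2}^2$, whereas the transversal element $b$ must be drawn from $\begin{pmatrix}\mathbb{F}_q \\ \mathbb{F}_q\end{pmatrix}$. The resolution hinges on the rationality of $A$, which forces $\mathrm{I}_2 + A$ to preserve the field decomposition and lets me extract an $\mathbb{F}_q$-rational preimage from any $\mathbb{F}_{q^2}$-rational one. Everything else reduces to routine matrix arithmetic in characteristic $2$.
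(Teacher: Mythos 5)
Your proof is correct, and it diverges from the paper's at exactly one point, the crucial one. Both arguments share the same skeleton: $H_q$ is an elementary abelian $2$-group, so Lemma~\ref{lem:char2} combined with Lemma~\ref{lem:normalizer} reduces the theorem to verifying $\Phi(N,H_q)$ for $N=\left\lbrace (a,A) \mid a\in\mathbb{F}_{q^2}^2,\ A\in\gl_2(\mathbb{F}_q)\right\rbrace$, and both derive the identity $(gh)^2=\left((\mathrm{I}_2+A)(a+b),\mathrm{I}_2\right)$ for $h=(b,\mathrm{I}_2)\in H_q$, valid because $A^2=\mathrm{I}_2$. The difference is in how one solves $(\mathrm{I}_2+A)b=(\mathrm{I}_2+A)a$ with $b$ constrained to lie in $\begin{pmatrix}\mathbb{F}_q\\ \mathbb{F}_q\end{pmatrix}$. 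The paper proceeds by explicit computation: after disposing of $A=\mathrm{I}_2$, it uses $A^2=\mathrm{I}_2$ in characteristic $2$ to force $\tr(A)=0$ and $\det(A)=1$, writes $A=\begin{pmatrix} t & v\\ u & t\end{pmatrix}$ with $(t-1)^2=uv$, and splits into the cases $t=1$ and $t\neq 1$, exhibiting a concrete witness $b$ in each. You instead make a descent argument: since $\mathrm{I}_2+A$ has entries in $\mathbb{F}_q$, it respects the decomposition $\mathbb{F}_{q^2}^2=\begin{pmatrix}\mathbb{F}_q\\ \mathbb{F}_q\end{pmatrix}\oplus\alpha\begin{pmatrix}\mathbb{F}_q\\ \mathbb{F}_q\end{pmatrix}$, so writing $a=a_0+\alpha a_1$ and comparing components of $(\mathrm{I}_2+A)a\in\begin{pmatrix}\mathbb{F}_q\\ \mathbb{F}_q\end{pmatrix}$ shows that $b=a_0$ already works; this is sound, since the directness of the sum gives uniqueness of components. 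Your route is shorter and uniform --- no case analysis, no loss of generality, and even $A=\mathrm{I}_2$ is absorbed ($\mathrm{I}_2+A=0$ forces $c=0$ there). In effect you invoke the general principle that a linear system with coefficients in $\mathbb{F}_q$ that is solvable over $\mathbb{F}_{q^2}$ is solvable over $\mathbb{F}_q$, which requires nothing about $A$ beyond its $\mathbb{F}_q$-rationality; the hypothesis $A^2=\mathrm{I}_2$ enters only through the $(gh)^2$ identity. What the paper's computation buys in exchange is fully explicit formulas for $b$ in terms of the entries of $A$ and of $(\mathrm{I}_2+A)a$, but since $\Phi$ only demands existence of $h$, your argument is arguably the cleaner of the two.
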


\begin{proof}
	Since $H_q$ is a $2$-group, we may apply Lemma~\ref{lem:char2}. Consequently, we only need to show that $\Phi\left(N_{\agl(2,q^2)}(H_q),H_q\right)$ holds. Let $g=(a,A) \in N_{\agl(2,q^2)}(H_q)$ such that $(a,A)^2 = ((A+\mathrm{I}_2)a,A^2) \in H_q$, that is, $(A+\mathrm{I}_2)a \in \begin{pmatrix} \mathbb{F}_q \\ \mathbb{F}_q \end{pmatrix}$ and $A^2=\mathrm{I}_2$. Let us prove that there exists $b \in \begin{pmatrix} \mathbb{F}_q \\ \mathbb{F}_q \end{pmatrix}$ such that $\left((a,A)(b,\mathrm{I}_2)\right)^2 = (0,\mathrm{I}_2) $. 
	
	First we note that if $A = \mathrm{I}_2$, then $(a,A)^2 = (a,\mathrm{I}_2)^2 = (a+a,\mathrm{I}_2) = (0,\mathrm{I}_2)$. Thus, for $b = 0$, we have $\left((a,A)(b,\mathrm{I}_2)\right)^2 = (0,\mathrm{I}_2) $. Therefore, we assume henceforth that $A\neq \mathrm{I}_2$. 
	
	Suppose that $(A+\mathrm{I}_2)a= \begin{pmatrix} a_1 \\ a_2 \end{pmatrix} \in \begin{pmatrix} \mathbb{F}_q \\ \mathbb{F}_q \end{pmatrix}$. Recall that the spectrum of a square matrix is the multiset consisting of all its eigenvalues. Since $A^2 = \mathrm{I}_2$, the spectrum of $A$ is the multiset $\{1 ,1\}$ and $\mathrm{tr}(A)=0$. Thus, we may write 
	$
	A=\begin{pmatrix} t & v \\ u & t \end{pmatrix}, 
	$
	for some $t,u,v \in \mathbb{F}_{q}$.	
	As $\mathrm{det}(A)=t^2+uv=1$, we obtain the equality $(t-1)^2=uv$.
	
	For any $h=(b, \mathrm{I}_2) \in H_q$, we have
	\begin{equation}\label{eq:ghsquare}
	(gh)^2=\left( (a,A)(b,\mathrm{I}_2) \right)^2=\left( a+Ab,A \right)^2=\left( (A+\mathrm{I}_2)(a+b), \mathrm{I}_2 \right).
	\end{equation}
	
	\noindent{\it 1. Assume that $t=1$.} 
	
	In this case, $uv=(t-1)^2=0$ and so $u=0$ or $v=0$. Without loss of generality, assume that $u = 0$. Then, $A = \begin{pmatrix} 1 & v \\ 0 & 1 \end{pmatrix}$ and since $A \neq \mathrm{I}_2$, we must have $v\neq 0$. Hence, $A+\mathrm{I}_2 =  \begin{pmatrix} 0 & v \\ 0 & 0 \end{pmatrix}$. Consequently, $(A+\mathrm{I}_2)a= \begin{pmatrix} a_1 \\ a_2 \end{pmatrix}$ implies that $a_2 = 0$. Taking $b= \begin{pmatrix} 0 \\ -v^{-1}a_1 \end{pmatrix} \in \mathbb{F}_q^2$  in \eqref{eq:ghsquare}, we have $(gh)^2= (0,\mathrm{I}_2)$.
	\\ \\
	\noindent{\it 2. Assume that $t \neq 1$.} 
	
	Let $t'=t-1$. Since $uv=(t-1)^2=t'^2 \neq 0$, we know that $u \neq 0$, $v \neq 0$, and $A+I= \begin{pmatrix} 
	t' & u^{-1}t'^2 \\ u & t'
	\end{pmatrix}$. Note that $(A+\mathrm{I}_2)a= \begin{pmatrix} a_1 \\ a_2 \end{pmatrix}$ implies $a_1=u^{-1}t'a_2$. By letting $b=\begin{pmatrix} 0 \\ -t'^{-1}a_2 \end{pmatrix} \in \mathbb{F}_q^2$ in \eqref{eq:ghsquare}, we have $(gh)^2=(0,\mathrm{I}_2)$.
	
	We conclude that $\Phi\left( N_{\agl(2,q^2)(H_q)},H_q \right)$ holds, therefore $H_q$ is a subgroup perfect code of $\agl(2,q^2)$.
\end{proof}

\subsubsection*{Acknowledgment} We are grateful to the anonymous reviewers for their insightful comments, which helped improve this paper.

\bibliographystyle{plain}

\end{document}